\theoremstyle{plain}
\newtheorem{thm}{Theorem}[section]
\newtheorem{lem}[thm]{Lemma}
\newtheorem{prop}[thm]{Proposition}
\theoremstyle{definition}
\newtheorem{defi}[thm]{Definition}
\newtheorem{ex}[thm]{Example}
\newtheorem{rem}[thm]{Remark}
\DeclareMathOperator{\coker}{coker}
\DeclareMathOperator{\im}{im}
\DeclareMathOperator{\id}{id}
\DeclareMathOperator*{\colim}{colim}
\DeclareMathOperator{\Hom}{Hom}
\DeclareMathOperator{\rad}{rad}
\newcommand\CH{\check{H}}
\title{Structure of semi-continuous q-tame persistence modules}
\author{Maximilian Schmahl}
\address{Maximilian Schmahl, Mathematisches Institut, Universit\"at Heidelberg}
\email{\href{mailto:mschmahl@mathi.uni-heidelberg.de}{mschmahl@mathi.uni-heidelberg.de}}
\date{}
   \def\MR#1{}
\begin{document}

\begin{abstract}
Using a result by Chazal, Crawley-Boevey and de Silva concerning radicals of persistence modules, we show that every lower semi-continuous q-tame persistence module can be decomposed as a direct sum of interval modules and that every upper semi-continuous q-tame persistence module can be decomposed as a product of interval modules.
\end{abstract}

\maketitle

\section{Introduction}
Motivated by the development of topological data analysis, in particular persistent homology, as well as by applications in symplectic topology, there has in recent years been some theoretical interest in certain algebraic structures called persistence modules.

For us, a \emph{persistence module} is a functor $M\colon\mathbf{T}\to\mathbf{Vec}_{\mathbb{F}}$, where $\mathbb{F}$ is some field and $\mathbf{T}$ is the category corresponding to some totally ordered set $(T,\leq)$. $M$ is called \emph{pointwise finite dimensional (p.f.d.)} if $M_{t}$ is a finite dimensional vector space over $\mathbb{F}$ for all $t\in T$ and it is called \emph{q-tame} if the linear map $M_{s,t}$ has finite rank for all $s,t\in T$ with $s<t$. We call $M$ \emph{ephemeral} if $M_{s,t}=0$ for all $s,t\in T$ with $s<t$.

The category of persistence modules is the functor category $\mathbf{Vec}_{\mathbb{F}}^{\mathbf{T}}$. This category is abelian since $\mathbf{T}$ is small and the category of vector spaces is abelian. Kernels, cokernels, direct sums, products, etc. are all given by their pointwise analogues.

One of the most important questions in the theory of persistence is when a given persistence module can be decomposed into elementary building blocks, namely interval modules: For $I\subseteq T$ an interval, define a persistence module $C(I)$ via 
\[
C(I)_t=
\begin{cases}
    \mathbb{F} & \text{if } t\in I,\\
    0              & \text{otherwise,}
\end{cases}
\]
with structure maps 
\[
C(I)_{s,t}=
\begin{cases}
    \id_{\mathbb{F}} & \text{if } s,t\in I,\\
    0              & \text{otherwise.}
\end{cases}
\]
Such persistence modules are called \emph{interval modules}. We say that a persistence module $M$ has a \emph{barcode} if there exists an index set $A$ and a collection of intervals $(I_{\alpha})_{\alpha\in A}$ such that 
\[
M\cong\bigoplus_{\alpha\in A}C(I_{\alpha}).
\]
By the Krull-Remak-Schmidt-Azumaya Theorem \cite{MR37832}, this collection of intervals is unique up to reordering if it exists. The most important existence result for barcodes is Crawley-Boevey's Theorem \cite{MR3323327,MR4143378}, which states that every p.f.d.\@ persistence module has a barcode. It is well-known that this does not extend to q-tame persistence modules. However, as shown by Chazal et al.\@ in \cite{MR3575998}, q-tame persistence modules can be decomposed as direct sums of interval modules up to \emph{weak isomorphism}, where a morphism $\varphi\colon M\to N$ of persistence modules is called a weak isomorphism if $\ker\varphi$ and $\coker\varphi$ are ephemeral.

Using the techniques developed by Chazal et al., we will show that, under some mild assumptions on the index set, an interesting class of q-tame persistence modules actually admit decompositions into interval modules up to isomorphism and not just weak isomorphism.

\begin{defi}
A persistence module $M$ is called \emph{upper semi-continuous (u.s.c.)} if the canonical map
\[
M_t\to\lim_{s>t} M_{s}
\]
 is an isomorphism for all $t\in T$. It is called \emph{lower semi-continuous (l.s.c.)} if the canonical map
 \[
 \colim_{s<t} M_{s}\to M_t
 \]
 is an isomorphism for all $t\in T$.
\end{defi}

While not explicitly stated by Chazal et al., the next result is an immediate corollary of \cite[Corollary 3.6.]{MR3575998}. The terms involving the index set will be introduced in \cref{defi:index_set}. In the important special case $T=\mathbb{R}$, all assumptions are satisfied.

\begin{thm}\label{thm:lsc}
Let $T$ be a dense totally ordered set such that every interval in $T$ has a countable coinitial subset. Then every q-tame lower semi-continuous persistence module indexed by $T$ has a barcode.
\end{thm}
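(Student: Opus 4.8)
The plan is to reduce the statement, via lower semi-continuity, to the structure theory of q-tame modules up to weak isomorphism, and in particular to \cite[Corollary 3.6]{MR3575998}.

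The first step records the elementary observation that lets lower semi-continuity do the work. Write $M^{-}$ for the persistence module with $(M^{-})_{t}=\colim_{s<t}M_{s}$ and the evident structure maps, and let $\eta_{M}\colon M^{-}\to M$ be the canonical morphism. Its image at $t$ is $\sum_{s<t}\im M_{s,t}=(\rad M)_{t}$, so $\eta_{M}$ is surjective precisely when $\rad M=M$, and it is an isomorphism precisely when $M$ is lower semi-continuous. Hence for our $M$ it suffices to produce a barcode for $M^{-}$.

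The second step is the key lemma: over a dense totally ordered set, the functor $(-)^{-}$ sends weak isomorphisms to isomorphisms. Indeed $\colim_{s<t}$ is exact on $\mathbf{Vec}_{\mathbb{F}}$ because $\{s : s<t\}$ is directed, and $\colim_{s<t}E_{s}=0$ whenever $E$ is ephemeral: density forces $\{s : s<t\}$ to have no maximum, while all transition maps of $E$ vanish, so every germ dies. Applying $\colim_{s<t}$ to the exact sequences $0\to\ker\varphi\to M\to\im\varphi\to 0$ and $0\to\im\varphi\to N\to\coker\varphi\to 0$ attached to a weak isomorphism $\varphi\colon M\to N$ then shows $\varphi^{-}$ is an isomorphism; the same applies to every arrow of a finite zig-zag. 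This is the only point at which density is used, and it is precisely what upgrades a weak-isomorphism statement to an isomorphism statement for l.s.c.\ modules; the vanishing already fails for index sets such as $T=\mathbb{Z}$. The countability assumption on coinitial subsets of intervals is, by contrast, only what makes \cite[Corollary 3.6]{MR3575998} available beyond $T=\mathbb{R}$.

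Finally I would assemble the argument. By \cite[Corollary 3.6]{MR3575998} there are intervals $(I_{\alpha})_{\alpha\in A}$ and a weak isomorphism — a priori a finite zig-zag of them — between $M$ and $N:=\bigoplus_{\alpha\in A}C(I_{\alpha})$. Applying $(-)^{-}$ and the lemma gives $M^{-}\cong N^{-}$, and since $\colim$ commutes with direct sums, $N^{-}\cong\bigoplus_{\alpha\in A}C(I_{\alpha})^{-}$. A short computation identifies $C(I)^{-}$ with $C(I')$ for $I':=\{t\in T : (r,t)\subseteq I\text{ for some }r<t\}$, which is again an interval (empty exactly when $I$ is empty or a singleton). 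Thus $M\cong M^{-}\cong\bigoplus_{\alpha\in A}C(I'_{\alpha})$, the desired barcode. The entire difficulty lives in the imported result \cite[Corollary 3.6]{MR3575998}; the rest is soft, the one step not to be skipped being the vanishing of $\colim_{s<t}E_{s}$ for ephemeral $E$. If \cite[Corollary 3.6]{MR3575998} is instead read in its radical form — that $\rad M$ is a direct sum of interval modules for every q-tame $M$ — the deduction shortens further still: lower semi-continuity of $M$ says exactly that $\rad M=M$.
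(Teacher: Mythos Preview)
Your proposal is correct, and in fact your closing sentence is precisely the paper's proof: Corollary~3.6 of \cite{MR3575998} in its radical form says $\rad M$ has a barcode, and lower semi-continuity means $\overline{M}\to M$ is an isomorphism, hence $M=\rad M$. That is the entire argument.

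The longer route you take first---pulling a weak-isomorphism decomposition of $M$ through the exact functor $(-)^{-}$ and then identifying $C(I)^{-}$---is valid but redundant once the radical formulation is available. It effectively re-proves that $\rad M$ (your $M^{-}$, since $\eta_{M}$ is injective here) inherits a barcode from the weak decomposition, which is already the content of the cited corollary. Your Lemma that $(-)^{-}$ kills ephemerals and hence sends weak isomorphisms to isomorphisms is the paper's \cref{lem:weak_iso_overline}; the paper deploys it only in the harder u.s.c.\ case, not here.
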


With some additional work, we will prove the following novel result.

\begin{thm}\label{thm:usc}
Let $T$ be a dense totally ordered set such that every interval in $T$ has a countable coinitial subset. Then for every q-tame upper semi-continuous persistence module $M$ indexed by $T$ there exists a collection of intervals $(I_{\alpha})_{\alpha\in A}$, unique up to reordering, such that 
\[
M\cong\prod_{\alpha\in A}C(I_{\alpha}).
\]
\end{thm}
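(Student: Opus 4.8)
The plan is to introduce a completion functor $\widehat{(-)}$ and to show that it transports the weak-isomorphism decomposition behind \cref{thm:lsc} to an honest product decomposition of any u.s.c.\ module. For a persistence module $M$, let $\widehat M$ be the persistence module with $\widehat M_{t}=\lim_{s>t}M_{s}$, with structure maps induced by the inclusions $(t',\infty)\cap T\subseteq(t,\infty)\cap T$ for $t<t'$; there is a canonical natural transformation $\eta\colon M\to\widehat M$ given by $\eta_{t}(x)=(M_{t,s}x)_{s>t}$, and $M$ is u.s.c.\ exactly when $\eta$ is an isomorphism. First I would record three formal properties. (a) $\eta$ is always a weak isomorphism: $\ker\eta_{t}=\{x\in M_{t}\mid M_{t,s}x=0\text{ for all }s>t\}$ is ephemeral, and $\widehat M_{t,t'}$ carries $(x_{s})_{s>t}$ to $\eta_{t'}(x_{t'})$, so $\coker\eta$ is ephemeral as well. (b) $\widehat M$ is q-tame whenever $M$ is, since for $s<s'<t'<t$ the image of $\widehat M_{s,t}$ has dimension at most $\rank M_{s',t'}<\infty$. (c) $\widehat{(-)}$ is idempotent and each $\widehat M$ is u.s.c., using density of $T$; so the u.s.c.\ modules are precisely those isomorphic to $\widehat N$ for some $N$ (equivalently, for any $N$ weakly isomorphic to them). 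Granting these, it suffices to establish two facts about $\widehat{(-)}$.

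The first fact is that $\widehat{(-)}$ sends weak isomorphisms to isomorphisms. Factoring a weak isomorphism $\varphi\colon A\to B$ through its image, $K=\ker\varphi$ and $Q=\coker\varphi$ are ephemeral. Since $(t,\infty)\cap T$ is totally ordered and has a countable coinitial subset, it admits a countable decreasing coinitial chain, along which $\lim$ and $\lim^{1}$ can be computed; the transition maps of $K$ and $Q$ along this chain vanish, whence $\lim_{s>t}K_{s}=\lim_{s>t}Q_{s}=0$ and, crucially, $\lim^{1}_{s>t}K_{s}=0$ (this is where the countability hypothesis on $T$ enters). Feeding $0\to K\to A\to\im\varphi\to0$ and $0\to\im\varphi\to B\to Q\to0$ into the exact sequence of $\lim$ and $\lim^{1}$ then gives natural isomorphisms $\widehat A\cong\widehat{\im\varphi}\cong\widehat B$ whose composite is $\widehat\varphi$. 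The second fact is that if $N=\bigoplus_{\alpha\in A}C(J_{\alpha})$ is q-tame, then the natural map $\widehat N\to\prod_{\alpha\in A}\widehat{C(J_{\alpha})}$ is an isomorphism, and each $\widehat{C(J_{\alpha})}$ is an interval module $C(\widehat{J_{\alpha}})$ (the interval with the same endpoints as $J_{\alpha}$ but closed on the left and open on the right) unless $C(J_{\alpha})$ is ephemeral, in which case $\widehat{C(J_{\alpha})}=0$. That map is injective by left-exactness of $\lim$, and it is surjective because a compatible family in $\lim_{s>t}\prod_{\alpha}C(J_{\alpha})_{s}$ is, at every level $s>t$, supported inside $\{\alpha\mid[u,s]\subseteq J_{\alpha}\}$ for any $u$ with $t<u<s$, which is finite by q-tameness of $N$; the identification of $\widehat{C(J)}$ is a short case analysis.

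Given these two facts, the existence half of \cref{thm:usc} follows at once. Let $M$ be q-tame and u.s.c. By the theorem of Chazal et al.\ that q-tame persistence modules are weakly isomorphic to barcoded ones (\cite{MR3575998}), which underlies \cref{thm:lsc}, $M$ is weakly isomorphic to $N=\bigoplus_{\alpha\in A}C(J_{\alpha})$, and $N$ is again q-tame because weak isomorphisms preserve q-tameness. Applying $\widehat{(-)}$ and the two facts, $\widehat M\cong\widehat N\cong\prod_{\alpha\in A}\widehat{C(J_{\alpha})}\cong\prod_{\alpha\in A'}C(I_{\alpha})$ after discarding the zero factors, and $M\cong\widehat M$ since $M$ is u.s.c.

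For uniqueness, one first checks, using that $\lim$ commutes with products and that a product of linear maps is an isomorphism iff each factor is, that every factor of a u.s.c.\ product of interval modules is u.s.c., so each $I_{\alpha}$ is one of the left-closed, right-open intervals produced above; for such a uniform family the multiset $(I_{\alpha})_{\alpha}$ is recovered from $M$ either by the rank function $(s,t)\mapsto\rank M_{s,t}=\#\{\alpha\mid[s,t]\subseteq I_{\alpha}\}$ via inclusion-exclusion together with limits over countable coinitial subsets, or by reducing to the uniqueness of the weak-isomorphism barcode of \cite{MR3575998}. I expect the second fact in the middle paragraph to be the main obstacle: that $\widehat{(-)}$ commutes with the barcode decomposition only after the direct sum is replaced by a product is exactly what forces products into \cref{thm:usc} while \cref{thm:lsc} needs only sums, and the finite-support bound there is precisely where q-tameness does its work.
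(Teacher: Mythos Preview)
Your approach is essentially the paper's: your $\widehat{(-)}$ is the paper's $\underline{(-)}$, your first fact is \cref{lem:weak_iso_underline}, your second fact is \cref{lem:sum_prod} combined with the commutation of $\underline{(-)}$ with products (you argue it directly rather than via the weak isomorphism $\bigoplus\to\prod$), and your uniqueness route (ii) via the weak-isomorphism barcode is the paper's argument through $\rad M$ and Krull--Remak--Schmidt--Azumaya. The one ingredient you leave implicit is \cref{lem:usc_rad} (that $\underline{\rad J}=J$ whenever $\underline{J}=J$), which the paper needs to pass back from the barcode of $\rad M$ to the original product factors; your parenthetical description of $\widehat{C(J)}$ as ``left-closed, right-open'' should also be replaced by $C(\underline{J})$ for general dense $T$.
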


In general, uniqueness statements for product decompositions are much harder to come by than in the case of direct sums where one has the Krull-Remak-Schmidt-Azumaya Theorem. We will also infer our uniqueness statement from this theorem, rather than from a general statement about products.

In order to distinguish product and direct sum decompositions, we suggest the following terminology.

\begin{defi}
We say that a persistence module has an \emph{additive barcode} if it is isomorphic to the direct sum of a collection of interval modules that is unique up to reordering, i.e., if it has a barcode in the usual sense. We say that it has a \emph{multiplicative barcode} if it is isomorphic to a product of a collection of interval modules that is unique up to reordering. 
\end{defi}

In the p.f.d.\@ case, the two notions agree. In the q-tame case, however, there are persistence modules that have a multiplicative barcode, but no additive barcode and vice versa (\cref{ex:semi_cont}).

\section{Preliminaries}\label{sec:background}
Let us begin by introducing some terminology.
\begin{defi}\label{defi:index_set}
A totally ordered set $T$ is called \emph{dense} if for all $s,t\in T$ with $s<t$ there exists $u\in T$ with $s<u<t$. If $N\subseteq I\subseteq T$ are subsets, $N$ is said to be \emph{coinitial} in $I$ if for all $t\in I$ there exists $s\in N$ with $s\leq t$. $N$ is said to be \emph{cofinal} in $I$ if for all $t\in I$ there exists $s\in N$ with $t\leq s$.
\end{defi}

A central tool in proving our results is the radical of a persistence module.

\begin{defi}{{\cite[Definition 2.10., Remark 2.12., Definition 3.4.]{MR3575998}}}
If $M$ is a persistence module, we define a persistence module $\underline{M}$ by 
\[
\underline{M}_{t}=\lim_{s>t}M_{s}
\]
with the obvious structure maps. The canonical maps $M_{t}\to\lim_{s>t}M_{s}$ form a morphism $M\to\underline{M}$. We also define a persistence module $\overline{M}$ by 
\[
\overline{M}_{t}=\colim_{s<t}M_{s},
\]
again with the obvious structure maps. The canonical maps $\colim_{s<t}M_{s}\to M_{t}$ form a morphism $\overline{M}\to M$. We define the \emph{radical} of $M$ as 
\[
\rad M=\im(\overline{M}\to M).
\]
The assignments $\underline{(-)}$, $\overline{(-)}$ and $\rad(-)$ extend to endofunctors on the category of persistence modules by acting on morphisms via the universal properties of limits, colimits and images.
\end{defi}

By convention, we assume limits and colimits over empty index sets to be $0$ in the definition above. The radical of a q-tame persistence module need not be p.f.d.\@ (\cite[Example 3.7.]{MR3575998}). Still, a barcode existence theorem involving descending chain conditions for images and kernels of structure maps yields the following.

\begin{thm}\label{thm:rad}{\cite[Corollary 3.6.]{MR3575998}}
Let $T$ be a dense totally ordered set such that every interval in $T$ has a countable coinitial subset. If $M$ is a q-tame persistence module indexed by $T$, its radical $\rad M$ has a barcode.
\end{thm}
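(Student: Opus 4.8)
The plan is to deduce \cref{thm:rad} from a general decomposition criterion and then to verify that criterion for $N:=\rad M$. The relevant criterion --- the ``barcode existence theorem'' alluded to above --- is of the following shape: a persistence module over a dense totally ordered set $T$ all of whose intervals have countable coinitial subsets decomposes as a direct sum of interval modules provided that all of its structure maps have finite rank and that the families of images $\{\im N_{s,t}:s<t\}$ and of kernels $\{\ker N_{t,s}:s>t\}$ satisfy appropriate descending chain conditions. Granting such a theorem, one must check its hypotheses for $N=\rad M$; the uniqueness of the resulting barcode is then automatic from the Krull-Remak-Schmidt-Azumaya theorem already invoked in the introduction.

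Two preliminary observations make the bookkeeping manageable. First, $\rad M=\im(\overline M\to M)$ is a sub-persistence-module of $M$, so every structure map $N_{s,t}$ is the restriction of $M_{s,t}$ and hence $\rank N_{s,t}\le\rank M_{s,t}<\infty$; in particular each $\im N_{s,t}$ is finite-dimensional and each $\ker N_{s,t}$ has finite codimension in $N_s$. Second, $N$ equals its own radical: writing $(\rad M)_s=\sum_{r<s}\im M_{r,s}$ and using functoriality of $M$, one gets
\[
(\rad N)_t=\sum_{s<t}\im N_{s,t}=\sum_{s<t}\sum_{r<s}\im M_{r,t}=\sum_{r<t}\im M_{r,t}=(\rad M)_t=N_t ,
\]
where the third equality is exactly where density of $T$ enters: for each $r<t$ one interpolates some $s$ with $r<s<t$. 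So $\rad N=N$, which says that $N$ has no feature that is ``born'' only in a limit from the left.

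The heart of the argument is then to promote these two facts, together with the countability hypothesis on $T$, into the descending chain conditions demanded by the criterion. On the image side this is comparatively soft: the $\im N_{s,t}$ are finite-dimensional and, for fixed $t$, nested as $s$ varies, so any descending chain of them stabilises, and $\rad N=N$ identifies $N_t$ with their union; this controls the ``left endpoints'' of the prospective interval summands. The kernel side is the delicate point. One must exclude infinite strictly descending chains of subspaces $\ker N_{t,s}$ with $s>t$, and the argument pinning down the corresponding ``right endpoints'' proceeds by exhausting $(t,\infty)$ along a countable coinitial subset (whose existence is the countability hypothesis) and applying the finite-rank bound together with a Krull-Remak-Schmidt argument at each finite stage. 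This is precisely the place where a general q-tame module would fail --- such modules need not admit a barcode at all --- and where the passage to the radical does its work.

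Accordingly, the main obstacle is this last step: extracting from $\rad N=N$, from q-tameness, and from the countability of coinitial subsets exactly the chain-theoretic input of the cited structure theorem, and verifying that it holds. A more hands-on alternative would be to write $\rad M$ as the directed colimit of its finitely generated submodules --- each of which is pointwise finite-dimensional because $M$ is q-tame, hence has a barcode by Crawley-Boevey's theorem --- and then to pass to the colimit; but directed colimits of interval-decomposable modules need not be interval-decomposable, so this route once more forces one to use $\rad N=N$ and the countability hypothesis to keep the colimit under control, which is the same difficulty in a different guise.
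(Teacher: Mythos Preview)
This theorem is not proved in the paper: it is quoted as \cite[Corollary 3.6]{MR3575998} and used as a black box, preceded only by the one-line remark that it follows from ``a barcode existence theorem involving descending chain conditions for images and kernels of structure maps.'' Your outline is precisely an attempt to unpack that remark, so there is no argument in the present paper to compare against; you are sketching the content of the cited reference rather than anything this paper supplies.

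Taken on its own terms, your sketch is a reasonable plan but not a proof. The preliminary observations are correct: $\rad M$ is a submodule of $M$, so its structure maps inherit finite rank, and your computation $\rad(\rad M)=\rad M$ via density is sound. The image-side chain condition is indeed soft for the reason you give. But you explicitly flag the kernel side as ``the delicate point'' and then do not carry it out: the sentence invoking countable coinitial subsets and ``a Krull--Remak--Schmidt argument at each finite stage'' is a gesture toward an argument, not an argument, and your alternative colimit route ends, by your own admission, in the same difficulty. Since this is exactly the step where a general q-tame module (which need not have a barcode) is distinguished from its radical, the missing verification is not a technicality but the substance of the result. So the proposal correctly identifies the architecture of the proof hinted at in the paper and worked out in the cited reference, but leaves the essential step unfilled.
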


This is already all we need in order to prove that l.s.c.\@ q-tame persistence modules have a barcode.

\begin{proof}[Proof of \cref{thm:lsc}]
By definition, a persistence module $M$ is lower semi-continuous if the canonical morphism $\overline{M}\to M$ is an isomorphism. In particular, a lower semi-continuous persistence module is isomorphic to its radical. Thus, the claim is an immediate consequence of \cref{thm:rad}.
\end{proof}

Next, we will analyse how the functors defined above behave on interval modules.

\begin{defi}
For $t\in T$, we write
\begin{align*}
\uparrow t&=\{s\in T\mid s> t\}\\
\downarrow t&=\{s\in T\mid s<t\}
\end{align*}
for the strict upset and the strict downset of $t$. If $I\subseteq T$ is an interval, we define
\begin{align*}
\underline{I}&=\left\{t\in T\mid I\cap\uparrow t\text{ is non-empty and coinitial in }\uparrow t\right\} \\
\overline{I}&=\left\{t\in T\mid I\cap\downarrow t\text{ is non-empty and cofinal in }\downarrow t\right\} \\
\rad I&=I\cap\overline{I}
\end{align*}
\end{defi}

\begin{lem}
Let $I\subseteq T$ be an interval. Then the sets $\underline{I}$, $\overline{I}$ and $\rad I$ are again intervals in $T$ if they are non-empty.
\end{lem}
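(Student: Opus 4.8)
The claim is that for an interval $I\subseteq T$, each of $\underline{I}$, $\overline{I}$, $\rad I$ is an interval when non-empty. Recall $I\subseteq T$ is an interval means: if $s,u\in I$ and $s<t<u$ then $t\in I$. By symmetry (reversing the order of $T$ swaps the roles of $\underline{I}$ and $\overline{I}$, since $\uparrow t$ and $\downarrow t$ swap, and coinitial/cofinal swap), it suffices to prove the statement for $\overline{I}$; then $\underline{I}$ follows by duality, and $\rad I = I\cap\overline{I}$ is an intersection of two intervals, hence an interval. So the plan is to carry out the verification for $\overline{I}$ carefully and invoke duality for the rest.

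So let $s,u\in\overline{I}$ with $s<t<u$; I must show $t\in\overline{I}$, i.e., that $I\cap\downarrow t$ is non-empty and cofinal in $\downarrow t$. For non-emptiness: since $s\in\overline{I}$, the set $I\cap\downarrow s$ is non-empty, and $\downarrow s\subseteq\downarrow t$, so $I\cap\downarrow t\supseteq I\cap\downarrow s\neq\emptyset$. For cofinality: let $r\in\downarrow t$ be arbitrary; I must find $r'\in I\cap\downarrow t$ with $r\leq r'$. I split into two cases according to whether $r< s$ or $s\leq r$. If $r<s$: since $s\in\overline{I}$, $I\cap\downarrow s$ is cofinal in $\downarrow s$, so there is $r'\in I\cap\downarrow s$ with $r\leq r'$; and $\downarrow s\subseteq\downarrow t$ gives $r'\in I\cap\downarrow t$, as needed. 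If $s\leq r<t$: now use $u\in\overline{I}$; since $s\in I\cap\downarrow u$ and $I\cap\downarrow u$ is cofinal in $\downarrow u$, and $r\in\downarrow u$, there is $r''\in I\cap\downarrow u$ with $r\leq r''$ — but that only places $r''$ below $u$, not below $t$. The fix is to observe that $s\leq r''$ would give, together with $r''<u$, that… no: instead pick $r''\in I\cap\downarrow u$ with $r'' \geq r \geq s$; if $r'' < t$ we are done with $r'=r''$, and if $r''\geq t$ then since $s\in I$, $r''\in I$, and $s\leq t\leq r''$, the interval property of $I$ forces $t\in I$, so $t\in I\cap\downarrow t$ and we may take $r'=t$. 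Either way cofinality holds, so $t\in\overline{I}$.

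The one genuinely delicate point — the main obstacle — is the last case $s\leq r<t$: a naive appeal to cofinality of $I\cap\downarrow u$ in $\downarrow u$ produces an element below $u$ but not necessarily below $t$, and one must exploit the interval property of $I$ together with the sandwich $s\leq t\leq r''$ (or $s\le t<u$ directly: since $s\in I$ and there are points of $I$ below $u$ arbitrarily close to $u$ from below, in particular above $t$ unless $t\notin\downarrow u$, one always lands in $I$ at height $t$ or finds a witness strictly below $t$). I will want to phrase this cleanly, perhaps by first noting that $[s,u)\cap I$ is an interval containing $s$ and cofinal in $[s,u)$, hence either $t\in I$ or there is a point of $I$ in $[s,t)$ above $r$. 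After $\overline{I}$ is settled, $\underline{I}$ is immediate by applying the result to the opposite order, and $\rad I=I\cap\overline{I}$ is an intersection of intervals and therefore an interval, completing the proof.
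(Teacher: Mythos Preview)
Your overall strategy matches the paper's: verify the interval condition directly for one of $\underline{I},\overline{I}$ and handle the others by symmetry and by noting that $\rad I=I\cap\overline{I}$ is an intersection of intervals. The paper does $\underline{I}$; you do $\overline{I}$, which is fine.

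However, your execution of the case $s\leq r<t$ contains two genuine errors. First, you twice assert $s\in I$ (once as ``$s\in I\cap\downarrow u$'', once as ``since $s\in I$'') without justification. This is \emph{not} automatic from $s\in\overline{I}$: for $I=(0,1)\subseteq\mathbb{R}$ one has $1\in\overline{I}\setminus I$. It is true here, but it requires an argument, and in fact this is exactly the key step the paper isolates (in dual form it first proves $u\in I$): from $u\in\overline{I}$ and $s<u$ pick $c\in I\cap\downarrow u$ with $c\geq s$; from $s\in\overline{I}$ pick $a\in I\cap\downarrow s$; then $a<s\leq c$ with $a,c\in I$ forces $s\in I$.

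Second, your conclusion ``$t\in I\cap\downarrow t$, so take $r'=t$'' is simply false: $\downarrow t$ is the \emph{strict} downset, so $t\notin\downarrow t$. The repair is immediate once $s\in I$ is in hand: if $r\leq s$ take $r'=s\in I\cap\downarrow t$; if $s<r$, use cofinality of $I\cap\downarrow u$ in $\downarrow u$ to pick $c\in I$ with $r\leq c<u$, and then $s<r\leq c$ with $s,c\in I$ gives $r\in I$, so $r'=r$ works. This is precisely the (dualized) argument in the paper. Your later remarks about ``$[s,u)\cap I$ cofinal in $[s,u)$'' gesture toward this but do not replace the missing $s\in I$ step.
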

\begin{proof}
We only show the claim for $\underline{I}$, the other ones can be shown similarly. Let $s,u\in \underline{I}$ and $t\in T$ with $s<t<u$. We need to show that $t\in\underline{I}$, i.e. for $a\in\uparrow t$ we need to find $b\in I\cap\uparrow t$ with $b\leq a$.

First, we show that $u\in I$: Since $u\in\underline{I}$, there exists some $v\in I\cap\uparrow u$. Since $u\in\uparrow s$ and $s\in\underline{I}$, there exists $c\in I\cap\uparrow s$ with $c\leq u$. We have $c\leq u<v$ and $c,v\in I$. Since $I$ is an interval, we get $u\in I$. In particular, we have $u\in I\cap\uparrow t$, so this set is non-empty.

Now consider $a\in\uparrow t$ again. We have $u\in\ I\cap\uparrow t$, so if $u\leq a$, we can set $b=u$ and are done. If $a< u$, pick $c\in I\cap\uparrow s$ with $c\leq a$. This is possible since $a\in\uparrow t\subseteq \uparrow s$ and $s\in\underline{I}$. Then, we have $c\leq a< u$ and $c,u\in I$, which implies $a\in I$. So in this case, we can simply set $b=a$ and the proof is finished.
\end{proof}

For the proof of our main theorem, we will need the following.

\begin{lem}\label{lem:usc_rad}
Let $T$ be a dense totally ordered set and $I\subseteq T$ an interval. If $\underline{I}=I$, then $\rad I$ is non-empty and $\underline{\rad I}=I$.
\end{lem}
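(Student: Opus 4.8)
The plan is to argue entirely at the level of the order-theoretic definitions, the main leverage being that an interval is order-convex. (We tacitly assume $I\neq\emptyset$; otherwise $\underline I=I=\emptyset$ while $\rad I=\emptyset$, so the statement is intended, as its applications require, for non-empty $I$.) The one computation I would isolate first is the following: \emph{if $t,c\in I$ with $t<c$, then $c\in\rad I$.} To see this it suffices to check $c\in\overline I$, i.e.\ that $I\cap\downarrow c$ is non-empty and cofinal in $\downarrow c$; non-emptiness is clear since $t$ lies in it, and given any $a<c$ one either has $a\le t$, so that $t$ witnesses cofinality at $a$, or $t<a<c$, in which case $a\in I$ by convexity and $a$ itself witnesses cofinality at $a$.

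Granting this, non-emptiness of $\rad I$ is immediate: choosing any $t_0\in I$, the hypothesis $t_0\in I=\underline I$ forces $I\cap\uparrow t_0\neq\emptyset$, so there is $t_1\in I$ with $t_1>t_0$, and then $t_1\in\rad I$. For the equality $\underline{\rad I}=I$ I would prove the two inclusions separately. The inclusion $\underline{\rad I}\subseteq I$ is the soft direction: since $\rad I\subseteq I$ we have $\rad I\cap\uparrow t\subseteq I\cap\uparrow t$ for every $t$, so whenever the left-hand side is non-empty and coinitial in $\uparrow t$ the right-hand side is as well, giving $t\in\underline I=I$.

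For $I\subseteq\underline{\rad I}$, fix $t\in I=\underline I$; I must show $\rad I\cap\uparrow t$ is non-empty and coinitial in $\uparrow t$. Non-emptiness is exactly the argument of the previous paragraph applied with $t_0=t$. For coinitiality, let $a\in\uparrow t$. Because $t\in\underline I$, the set $I\cap\uparrow t$ is coinitial in $\uparrow t$, so there is $c\in I$ with $t<c\le a$; by the isolated computation $c\in\rad I$, and $c\in\uparrow t$ with $c\le a$, which is what we need.

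The only step that requires genuine thought is the isolated claim that $c\in\rad I$ whenever $c\in I$ lies strictly above some point of $I$; once that is in place, both the non-emptiness of $\rad I$ and the coinitiality half of $\underline{\rad I}=I$ fall out at once, and the rest is formal manipulation of the definitions of $\uparrow(-)$, $\downarrow(-)$ and of coinitial/cofinal subsets. I do not anticipate needing the density hypothesis on $T$ for this particular lemma.
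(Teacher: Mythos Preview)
Your proof is correct and follows essentially the same strategy as the paper: both arguments reduce to showing that any $s\in I$ lying strictly above some $t\in I$ already belongs to $\overline{I}$, and then feed this into the definition of $\underline{\rad I}$. The one substantive difference is your final remark: you are right that density is not needed. In the paper's version, the case $a\le t$ is handled by invoking density to find some $c$ with $t<c<s$ and then arguing $c\in I$; but since $\underline I=I$ forces $t\in I$, one can simply take $b=t$ as the cofinality witness, exactly as you do. So your proof in fact establishes the lemma for an arbitrary totally ordered set $T$, slightly sharpening the paper's statement.
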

\begin{proof}
First, note that $\underline{I}=I$ implies that $\underline{I}$ is non-empty. In other words, there exists $t\in T$ such that $I\cap\uparrow t\neq\emptyset$ is coinitial in $\uparrow t$. We will show that $I\cap\uparrow t=\rad I\cap\uparrow t$. This immediately implies that $\rad I$ is non-empty. It also shows that $\underline{I}\subseteq\underline{\rad I}$. The other inclusion obviously also holds, so in total we get $\underline{\rad I}=\underline{I}=I$ as claimed.

It is clear that $I\cap\uparrow t\supseteq\rad I\cap\uparrow t$. To see the other inclusion, consider $s\in I\cap\uparrow t$. We need to show that $s\in\rad I=I\cap\overline{I}$, so it is enough to check that $s\in\overline{I}$. So let $a\in\downarrow s$. We need to find $b\in I\cap\downarrow s$ with $b\geq a$. 

If $a>t$, we have $a\in I$: Since $I\cap\uparrow t$ is coinitial in $\uparrow t$, we may choose $s'\in I\cap\uparrow t$ with $s'\leq a$. Now $s'\leq a<s$ and $s,s'\in I$, so $a\in I$ because $I$ is an interval. In this case, we can set $b=a$ and are done.

If $a\leq t$, we use the fact that $T$ is dense to choose $c\in T$ with $t<c<s$. By the same argument as before, we get $c\in I$ and can set $b=c$. This finishes the proof.
\end{proof}

Recall that for an interval $I$, we denote the corresponding interval module as defined in the introduction by $C(I)$. While we do not consider the empty set to be an interval, we set $C(\emptyset)=0$. Then, the lemma below still holds true if the involved sets are empty.

\begin{lem}\label{lem:over_under_rad_int}
For any interval $I\subseteq T$ we have
\begin{align*}
\underline{C(I)}&\cong C(\underline{I})\\ 
\overline{C(I)}&\cong C(\overline{I})\\
\rad C(I)&\cong C(\rad I)
\end{align*} 
\end{lem}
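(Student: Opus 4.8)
The plan is to compute each of the three functors at an arbitrary point $t\in T$ and then check that the resulting pointwise identifications are natural in $t$. I will carry out the case of $\underline{C(I)}$ in detail; the case of $\overline{C(I)}$ is handled by the same argument with limits replaced by filtered colimits, and $\rad C(I)$ then follows by a short additional computation.

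First I would fix $t$ and distinguish three cases according to the position of $I\cap\uparrow t$ inside $\uparrow t$. If $I\cap\uparrow t=\emptyset$, then $C(I)_s=0$ for every $s>t$, so $\lim_{s>t}C(I)_s=0$, and $t\notin\underline I$, so $C(\underline I)_t=0$. If $I\cap\uparrow t\neq\emptyset$ but is not coinitial in $\uparrow t$, there is some $a>t$ with $s>a$ for every $s\in I\cap\uparrow t$; then $a\notin I$, so $C(I)_a=0$, and using the structure maps one sees that every compatible family $(x_s)_{s>t}$ vanishes, so again the limit is $0$ and $t\notin\underline I$. Finally, if $I\cap\uparrow t$ is non-empty and coinitial in $\uparrow t$, i.e.\ $t\in\underline I$, then I claim $\lim_{s>t}C(I)_s$ is one-dimensional: the key observation is that coinitiality forbids any $s\in\uparrow t\setminus I$ from lying below an element of $I\cap\uparrow t$ (such an $s$ would be squeezed between two elements of $I$ and hence lie in $I$), so a compatible family $(x_s)_{s>t}$ is subject only to the requirement that its values on $I\cap\uparrow t$ all agree, and conversely any prescribed common value on $I\cap\uparrow t$ together with $x_s=0$ for $s\notin I$ defines such a family. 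Thus $(x_s)_{s>t}\mapsto x_r$, for any $r\in I\cap\uparrow t$, is a well-defined isomorphism $\lim_{s>t}C(I)_s\to\mathbb F=C(\underline I)_t$.

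For naturality, given $t<t'$ I would check the square formed by these identifications and the structure maps of $\underline{C(I)}$ and $C(\underline I)$. The only case that is not immediate is $t,t'\in\underline I$: the structure map of $\underline{C(I)}$ is restriction of compatible families from $\uparrow t$ to $\uparrow t'$, which under the identifications above sends the common value over $I\cap\uparrow t$ to the common value over $I\cap\uparrow t'$ — the same scalar — matching $C(\underline I)_{t,t'}=\id$; in every other case at least one corner of the square is $0$. This proves $\underline{C(I)}\cong C(\underline I)$, and both sides are $0$ when $\underline I=\emptyset$ by the convention $C(\emptyset)=0$. Replacing $\lim_{s>t}$ over $\uparrow t$ by $\colim_{s<t}$ over $\downarrow t$ and "coinitial" by "cofinal" throughout gives $\overline{C(I)}\cong C(\overline I)$ verbatim; here the degenerate cases use that every element of $\colim_{s<t}C(I)_s$ vanishes, and the case $t\in\overline I$ uses that $I\cap\downarrow t$ is cofinal in $\downarrow t$ and carries only identity transition maps. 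For the radical I would combine these with $\rad C(I)=\im(\overline{C(I)}\to C(I))$: at each $t$ the canonical map $\colim_{s<t}C(I)_s\to C(I)_t$ is zero unless $t\in I\cap\overline I=\rad I$, in which case it is an isomorphism $\mathbb F\to\mathbb F$ (realised by $C(I)_{r,t}=\id$ for $r\in I\cap\downarrow t$), so the pointwise image is that of $C(\rad I)$; since $\rad I\subseteq I$ and $\rad C(I)$ is a submodule of $C(I)$, its structure maps are restrictions of those of $C(I)$ and hence coincide with those of $C(\rad I)$, the latter being an interval by the previous lemma.

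The main obstacle is the "coinitial"/"cofinal" case of the first computation: one must see precisely why $t\in\underline I$ (resp.\ $t\in\overline I$) prevents the limit (resp.\ colimit) from collapsing to $0$, which is exactly the squeezing argument above, and — in the colimit case — that the subdiagram indexed by $I\cap\downarrow t$ is cofinal. Once these are in hand, the case analysis and the naturality checks are routine bookkeeping.
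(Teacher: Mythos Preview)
Your proposal is correct and follows essentially the same approach as the paper: a pointwise case analysis on whether $I\cap\uparrow t$ is empty, non-empty but not coinitial, or coinitial in $\uparrow t$, followed by the observation that the structure maps agree. The paper phrases the key step via restriction to a coinitial subdiagram (computing $\lim_{s>t}C(I)_s$ as $\lim_{s\in I\cap\uparrow t}\mathbb{F}$ when $t\in\underline I$, and as $\lim_{t<s\leq t_0}0$ when $t\notin\underline I$), whereas you unpack the same computation in terms of compatible families and the squeezing argument; these are the same idea, and your treatment of naturality and of $\rad C(I)$ is simply more explicit than the paper's ``clearly'' and ``analogously''.
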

\begin{proof}
Again, we only show the first isomorphism and the others can be shown analogously. For all $t\in\underline{I}$, we have
\[
\underline{C(I)}_{t}=\lim_{s\in\uparrow t} C(I)_{s}=\lim_{s\in I\cap\uparrow t} C(I)_{s}=\lim_{s\in I\cap\uparrow t} \mathbb{F}=\mathbb{F}.
\]
For $t\notin\underline{I}$, we have that $I\cap\uparrow t$ is empty or that there exists $t_{0}\in\uparrow{t}$ such that there is no $s\in I\cap\uparrow t$ with $s\leq t_{0}$. In the first case, we have 
\[
\underline{C(I)}_{t}=\lim_{s\in\uparrow t} C(I)_{s}=\lim_{s\in\uparrow t} 0=0
\]
In the second case, we have
\[
\underline{C(I)}_{t}=\lim_{s\in\uparrow t} C(I)_{s}=\lim_{t<s\leq t_{0}} C(I)_{s}=\lim_{t<s\leq t_{0}}0=0.
\]
Thus, $\underline{C(I)}$ and $C(\underline{I})$ agree pointwise. Clearly, their structure maps also agree and we obtain the claim.
\end{proof}

\section{Semi-Continuous Persistence Modules}\label{sec:main}
Recall that a persistence module is u.s.c.\@ if the canonical morphism $M\to\underline{M}$ is an isomorphism and l.s.c.\@ if the canonical morphism $\overline{M}\to M$ is an isomorphism. We start with a basic observation for direct sums and products.

\begin{lem}\label{lem:prod_sum_sc}
Let $(M_{\alpha})_{\alpha\in A}$ be a collection of persistence modules. 
\begin{enumerate}
	\item $\bigoplus_{\alpha\in A} M_{\alpha}$ is l.s.c.\@ if and only if all $M_{\alpha}$ are l.s.c.
	\item $\prod_{\alpha\in A} M_{\alpha}$ is u.s.c.\@ if and only if all $M_{\alpha}$ are u.s.c.
\end{enumerate}
\end{lem}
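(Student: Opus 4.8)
The plan is to reduce both statements to two elementary facts: that the functor $\overline{(-)}$ commutes with direct sums and the functor $\underline{(-)}$ commutes with products, and that a direct sum (resp. product) of morphisms of persistence modules is an isomorphism precisely when each of its components is.

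For part (1) I would first note that for every $t\in T$ the colimit $\colim_{s<t}(-)_s$ is, as the name says, a colimit, and colimits in $\mathbf{Vec}_{\mathbb{F}}$ commute with arbitrary direct sums; hence the canonical comparison map
\[
\colim_{s<t}\Bigl(\bigoplus_{\alpha\in A}M_{\alpha}\Bigr)_{s}\longrightarrow\bigoplus_{\alpha\in A}\colim_{s<t}(M_{\alpha})_{s}
\]
is an isomorphism, compatibly with the structure maps in $t$, so that $\overline{\bigoplus_{\alpha}M_{\alpha}}\cong\bigoplus_{\alpha}\overline{M_{\alpha}}$ as persistence modules. Chasing the universal properties defining the action of $\overline{(-)}$ on morphisms, one checks that under this identification the canonical morphism $\overline{\bigoplus_{\alpha}M_{\alpha}}\to\bigoplus_{\alpha}M_{\alpha}$ is exactly $\bigoplus_{\alpha}(\overline{M_{\alpha}}\to M_{\alpha})$. (The degenerate case $\downarrow t=\emptyset$, where both sides vanish by the convention fixed after the definition of the radical, causes no trouble.) It then remains to observe that $\bigoplus_{\alpha}f_{\alpha}$ is an isomorphism if and only if every $f_{\alpha}$ is: one direction is immediate by forming the direct sum of the inverses, and for the other one composes with the canonical inclusions and projections $\iota_{\alpha}$, $\pi_{\alpha}$, using $\pi_{\alpha}\iota_{\alpha}=\id$ and naturality, to produce a two-sided inverse of each $f_{\alpha}$. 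Combining these three observations yields the equivalence in (1).

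Part (2) is formally dual: limits commute with limits, so $\lim_{s>t}(-)_s$ commutes with arbitrary products, giving $\underline{\prod_{\alpha}M_{\alpha}}\cong\prod_{\alpha}\underline{M_{\alpha}}$, and under this identification the canonical morphism $\prod_{\alpha}M_{\alpha}\to\underline{\prod_{\alpha}M_{\alpha}}$ becomes $\prod_{\alpha}(M_{\alpha}\to\underline{M_{\alpha}})$. A product of morphisms is again an isomorphism if and only if each factor is, by the same inclusion/projection argument, and this gives (2).

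I expect the only genuinely delicate point to be the bookkeeping in step two, namely verifying that the canonical comparison morphism really is identified with the direct sum (resp. product) of the individual canonical morphisms; everything else is a routine interchange of (co)limits together with a purely categorical remark about isomorphisms of (co)products. In particular there is no interaction with q-tameness or with any hypothesis on $T$ here, so the lemma holds for an arbitrary collection of persistence modules over an arbitrary totally ordered index set.
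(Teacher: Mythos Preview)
Your proposal is correct and follows essentially the same route as the paper: the paper also argues that colimits commute with colimits to get $\overline{\bigoplus_\alpha M_\alpha}\cong\bigoplus_\alpha\overline{M_\alpha}$, and then uses that taking direct sums is \emph{conservative} (its term for your inclusion/projection argument that $\bigoplus_\alpha f_\alpha$ is an isomorphism iff each $f_\alpha$ is), with the product case handled dually. Your write-up is simply more explicit about the bookkeeping step identifying the canonical comparison morphism with the (co)product of the individual ones.
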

\begin{proof}
It is easy to check that taking direct sums of persistence modules is conservative, so the canonical map $\bigoplus_{\alpha\in A} \overline{M_{\alpha}}\to\bigoplus_{\alpha\in A} M_{\alpha}$ is an isomorphism if and only if all $M_{\alpha}$ are l.s.c. Colimits commute with each other, so we also have a canonical isomorphism 
\[
\overline{\bigoplus_{\alpha\in A} M_{\alpha}}\cong\bigoplus_{\alpha\in A} \overline{M_{\alpha}}.
\]
This implies the first claim. The second claim follows analogously because taking products of persistence modules is also conservative and limits commute with each other.
\end{proof}

Semi-continuity is also easy to characterize for interval modules.

\begin{lem}\label{lem:int_sc}
Let $I\subseteq T$ be an interval.
\begin{enumerate}
	\item $C(I)$ is l.s.c.\@ if and only if $I=\overline{I}$.
	\item $C(I)$ is u.s.c.\@ if and only if $I=\underline{I}$.
\end{enumerate}
\end{lem}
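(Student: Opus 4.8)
The plan is to prove both statements by directly unwinding the definitions of $\underline{I}$ and $\overline{I}$ together with the description of $\underline{C(I)}$ and $\overline{C(I)}$ provided by \cref{lem:over_under_rad_int}. Since the two items are dual, I will focus on item (2); item (1) follows by the same argument with the order reversed.

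For item (2), recall that $C(I)$ is u.s.c.\ precisely when the canonical morphism $C(I)\to\underline{C(I)}$ is an isomorphism. By \cref{lem:over_under_rad_int} we have $\underline{C(I)}\cong C(\underline{I})$, and under this identification the canonical morphism $C(I)\to\underline{C(I)}$ is, pointwise at $t$, the identity $\mathbb{F}\to\mathbb{F}$ whenever $t\in I\cap\underline{I}$ and the zero map otherwise (for $t\in I\setminus\underline{I}$ it is $\mathbb{F}\to 0$, and for $t\in\underline{I}\setminus I$ it is $0\to\mathbb{F}$). Hence this morphism is an isomorphism at every $t$ if and only if $I=\underline{I}$, which gives the claim. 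So the first step is to make precise that the comparison morphism corresponds, under the isomorphism of \cref{lem:over_under_rad_int}, to the inclusion-type morphism $C(I)\to C(\underline{I})$; this is essentially a diagram chase using that both sides are built from the constant diagram with value $\mathbb{F}$ on $I\cap\uparrow t$.

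Alternatively, and perhaps more cleanly, I would argue pointwise without invoking \cref{lem:over_under_rad_int} as a black box: if $I=\underline{I}$, then for each $t$ either $t\in I$, in which case $I\cap\uparrow t$ is non-empty and coinitial in $\uparrow t$ (because $t\in\underline{I}$ means exactly this — note one must check $t\in I$ implies $I\cap\uparrow t$ coinitial in $\uparrow t$, which uses that $I$ is an interval and, in the edge case that $t$ is the maximum of $I$, that $\uparrow t\cap I=\emptyset$ would force $t\notin\underline{I}$, so actually this needs care), so $\underline{C(I)}_t=\mathbb{F}$ and the canonical map $\mathbb{F}\to\mathbb{F}$ is an isomorphism; or $t\notin I=\underline{I}$, so $\underline{C(I)}_t=0=C(I)_t$. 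Conversely, if $C(I)$ is u.s.c., then $M_t\cong\underline{M}_t$ for all $t$, so $C(I)_t\neq 0$ iff $\underline{C(I)}_t\neq 0$, i.e.\ $t\in I$ iff $t\in\underline{I}$, giving $I=\underline{I}$.

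The main obstacle is the edge case in the direction ``$I=\underline{I}$ implies $C(I)$ u.s.c.'': one needs that for $t\in I$ the set $I\cap\uparrow t$ is non-empty and coinitial in $\uparrow t$. If $T$ is dense and $I$ has no maximum, this is automatic since $I$ is an interval; but if $t$ is the maximum of $I$, then $I\cap\uparrow t=\emptyset$, so $t\notin\underline{I}$, contradicting $t\in I=\underline{I}$ — hence the hypothesis $I=\underline{I}$ already rules this out, and a clean way to handle it is to observe that $I=\underline{I}$ forces $I$ to have no maximum, after which coinitiality of $I\cap\uparrow t$ in $\uparrow t$ for $t\in I$ is immediate. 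I expect this to be a one- or two-sentence remark rather than a genuine difficulty, but it is the point where a naive argument could slip.
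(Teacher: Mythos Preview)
Your proposal is correct and takes essentially the same approach as the paper: both reduce the statement to \cref{lem:over_under_rad_int}. The paper's proof is a one-liner that additionally invokes the fact that $C(J)\cong C(J')$ iff $J=J'$, leaving implicit the verification that the \emph{canonical} map realizes the isomorphism; your explicit tracking of this map (and the edge-case discussion, which as you yourself note is resolved automatically by the hypothesis $I=\underline{I}$) is more thorough but not a genuinely different route.
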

\begin{proof}
Both claims follow immediately from \cref{lem:over_under_rad_int} and the fact that for any two interval modules $C(J)$ and $C(J')$ we have $C(J)\cong C(J')$ if and only if $J=J'$.
\end{proof}

In particular, the two lemmas imply that if an l.s.c.\@ persistence module has an additive barcode, then all intervals $I$ appearing in this barcode satisfy $I=\overline{I}$. Similarly, if a u.s.c.\@ persistence module has a multiplicative barcode, then all intervals $I$ in this barcode satisfy $I=\underline{I}$. 

As an illustration, observe that a real interval $I\subseteq\mathbb{R}$ satisfies $I=\overline{I}$ if and only if $I=\mathbb{R}$ or $I=[a,b)$ for some $a\in\mathbb{R}$ and $b\in\mathbb{R}\cup\{\infty\}$. Analogously, we have $I=\underline{I}$ if and only if $I=\mathbb{R}$ or $I=(a,b]$ for some $a\in\mathbb{R}\cup\{-\infty\}$ and $b\in\mathbb{R}$.

Semi-continuous persistence modules appear naturally in many different contexts. Some authors, especially within symplectic topology, even go as far as to consider almost exclusively semi-continuous persistence modules, see e.g.\@ \cite{MR3437837}.

\begin{ex}\label{ex:semi_cont}
\begin{itemize}
	\item One of the standard examples of a q-tame persistence module indexed by $\mathbb{R}$ that does not have a barcode in the usual sense is 
	\[
	\prod_{n\in\mathbb{N}} C\left(\left[0,n^{-1}\right)\right).
	\]
	It is upper semi-continuous by the previous two lemmas, so it has a multiplicative barcode barcode by \cref{thm:usc}. Clearly, this is given by $([0,n^{-1}))_{n\in\mathbb{N}}$. In particular, this persistence module has a multiplicative barcode but no additive barcode.
	\item Consider the $\mathbb{R}$-indexed persistence module 
	\[
	\bigoplus_{n\in\mathbb{N}}C\left(\left(-n^{-1},0\right]\right).
	\]
	It is lower semi-continuous by the previous two lemmas and also q-tame. It has an additive barcode but no multiplicative barcode.
	\item Let $X\colon\mathbf{T}\to\mathbf{Top}$ be a diagram of topological spaces. If $X_{t}$ is a compact Hausdorff space for all $t\in T$ and $X$ is upper semi-continuous, i.e., $X_{t}\to\lim_{s>t} X_{s}$ is an isomorphism for all $t\in T$, then the persistent \v{C}ech homology $\CH_{*}(X,\mathbb{F})$ is upper semi-continuous by \cite[Theorem X.3.1.]{MR0050886}. This persistent \v{C}ech homology is studied by Morse in a project extending his calculus of variations in the large to the study of minimal surfaces, see e.g. \cite{MR1503341,MR1503471,MR1927}. To our knowledge, this is the first instance of persistent homology in the mathematical literature.
	\item Let $X$ be a topological space and $f\colon X\to\mathbb{R}$ a continuous map. Write $f_{<t}$ for the open sublevel set of $f$ at $t$. Since $f$ is continuous, we have 
	$
	f_{<t}=\colim_{s<t}f_{<s}
	$
	for all $t$, where the colimit is taken in the category of topological spaces. Using the fact that the interval $(-\infty,t)$ has a countable cofinal subset, the main theorem in \cite[Section 14.6.]{MR1702278} implies that the sublevel set persistence $H(f_{<\bullet})$ is lower semi-continuous. Here, $H$ is any generalized homology theory with values in $\mathbf{Vec}_{\mathbb{F}}$.
	\item For any persistence module $M$ indexed by $\mathbf{T}$, we get a dual persistence module $M^*$ indexed by $\mathbf{T}^{\mathrm{op}}$ defined by composing the functor $M\colon\mathbf{T}\to\mathbf{Vec}_{\mathbb{F}}$ with the contravariant functor $\Hom(-,\mathbb{F})\colon\mathbf{Vec}_{\mathbb{F}}\to\mathbf{Vec}_{\mathbb{F}}$. If $M$ is lower semi-continuous, then $M^*$ is upper semi-continuous:
	\begin{align*}
	\underline{M^*}_{t}&=\lim_{s<t}\Hom(M_{s},\mathbb{F})\\&=\Hom(\colim_{s<t}M_{s},\mathbb{F})\\&=\Hom(M_{t},\mathbb{F})\\&=M^*_{t},
	\end{align*}
	where equality should be interpreted as 'canonically isomorphic'. 
	
	However, if $M$ is upper semi-continuous, $M^*$ need \emph{not} be lower semi-continuous: Consider $M=\prod_{n\in\mathbb{N}} C\left(\left[0,n^{-1}\right)\right)$ as in the first example. An easy calculation shows that  
	\[
	\overline{M^*}_{0}\cong\bigoplus_{n\in\mathbb{N}}\mathbb{F},
	\]
	but we also have
	\[
	M^*_{0}=\Hom\left(\prod_{n\in\mathbb{N}}\mathbb{F},\mathbb{F}\right),
	\]
	which is isomorphic to the double dual space of $\bigoplus_{n\in\mathbb{N}}\mathbb{F}$. Since no infinite dimensional vector space is isomorphic to its double dual, we obtain that $M^*$ is not lower semi-continuous at $0$.
\end{itemize}
\end{ex}

Finally, we want to prove our decomposition theorem for the u.s.c.\@ case. An essential fact for our proof is that in the q-tame case, direct sums and products of persistence modules do not differ too much. Recall that a morphism of persistence modules is called a weak isomorphism if its kernel and cokernel are ephemeral.

\begin{prop}\label{lem:sum_prod}
Let $(M_{\alpha})_{\alpha\in A}$ be a collection of persistence modules such that $\prod_{\alpha\in A}M_{\alpha}$ is q-tame. Then the canonical map
\[
\bigoplus_{\alpha\in A}M_{\alpha}\to\prod_{\alpha\in A}M_{\alpha}
\]
is a weak isomorphism.
\end{prop}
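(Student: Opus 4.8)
The plan is to treat the kernel and the cokernel of the canonical map
$\iota\colon\bigoplus_{\alpha\in A}M_{\alpha}\to\prod_{\alpha\in A}M_{\alpha}$ separately and show that both are ephemeral. For the kernel this is immediate: kernels of persistence modules are computed pointwise, and at each $t\in T$ the map $\iota_{t}$ is the usual inclusion $\bigoplus_{\alpha}(M_{\alpha})_{t}\hookrightarrow\prod_{\alpha}(M_{\alpha})_{t}$, which is injective. Hence $\ker\iota=0$, which is in particular ephemeral.

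The real content is the cokernel $Q=\coker\iota$, which is again computed pointwise, $Q_{t}=\bigl(\prod_{\alpha}(M_{\alpha})_{t}\bigr)/\bigl(\bigoplus_{\alpha}(M_{\alpha})_{t}\bigr)$, with structure maps induced by those of $\prod_{\alpha}M_{\alpha}$. Fix $s<t$. The structure map $\bigl(\prod_{\alpha}M_{\alpha}\bigr)_{s,t}$ is the product map $\prod_{\alpha}(M_{\alpha})_{s,t}$, and by the q-tameness hypothesis it has finite rank. Its image is $\prod_{\alpha}\im\bigl((M_{\alpha})_{s,t}\bigr)$, which contains $\bigoplus_{\alpha}\im\bigl((M_{\alpha})_{s,t}\bigr)$; since a direct sum of infinitely many nonzero vector spaces is infinite-dimensional, finiteness of the rank forces the set $F=\{\alpha\in A\mid (M_{\alpha})_{s,t}\neq 0\}$ to be finite. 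Consequently $\bigl(\prod_{\alpha}M_{\alpha}\bigr)_{s,t}$ sends every element of $\prod_{\alpha}(M_{\alpha})_{s}$ to an element of $\prod_{\alpha}(M_{\alpha})_{t}$ supported on $F$, i.e., into $\bigoplus_{\alpha}(M_{\alpha})_{t}$. Therefore the induced map $Q_{s,t}$ is zero. As $s<t$ were arbitrary, $Q$ is ephemeral, and hence $\iota$ is a weak isomorphism.

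I do not anticipate a genuine obstacle here: the only mildly delicate point is the elementary linear-algebra fact that an infinite product (equivalently, an infinite direct sum) of nonzero $\mathbb{F}$-vector spaces is infinite-dimensional, and everything else is a formal consequence of the pointwise descriptions of limits, colimits, images, kernels and cokernels of persistence modules. The proposition is essentially the observation that q-tameness of the product forces, for each pair $s<t$, all but finitely many of the structure maps $(M_{\alpha})_{s,t}$ to vanish, so that the product structure map factors through the direct sum.
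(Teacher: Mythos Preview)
Your proof is correct and follows essentially the same approach as the paper: both show that for $s<t$ the structure map $\pi_{s,t}$ of the product factors through the inclusion $\varphi_t$ of the direct sum, using that $\im\pi_{s,t}\cong\prod_{\alpha}\im(M_{\alpha})_{s,t}$ is finite-dimensional by q-tameness. The paper phrases this as ``the canonical inclusion $\bigoplus_{\alpha}\im(M_{\alpha})_{s,t}\hookrightarrow\prod_{\alpha}\im(M_{\alpha})_{s,t}$ is an isomorphism'', while you make the equivalent but slightly more concrete observation that only finitely many $(M_{\alpha})_{s,t}$ can be nonzero; this is a cosmetic difference only.
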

\begin{proof}
Denote the map above by $\varphi$. Clearly, $\varphi$ has trivial kernel. Thus, it suffices to show that $\coker\varphi$ is ephemeral. So let $s,t\in T$ with $s<t$ and consider the diagram
\[
\begin{tikzcd}
\left(\bigoplus\limits_{\alpha\in A}M_{\alpha}\right)_{t}\arrow[r,"\varphi_{t}",hook]&\left(\prod\limits_{\alpha\in A}M_{\alpha}\right)_{t}\arrow[r,"p_{t}",two heads]&\coker\varphi_{t}\\
\left(\bigoplus\limits_{\alpha\in A}M_{\alpha}\right)_{s}\arrow[r,"\varphi_{s}",hook]\arrow[u,"\sigma_{s,t}"]&\left(\prod\limits_{\alpha\in A}M_{\alpha}\right)_{s}\arrow[r,"p_{s}",two heads]\arrow[u,"\pi_{s,t}"]&\coker\varphi_{s}\arrow[u,"\gamma_{s,t}"]
\end{tikzcd}
\]
where we added some shorthand notation for the structure maps of the persistence modules we consider. We need to check that $\gamma_{s,t}=0$. Since $p_{s}$ is epi, it is enough to show $\gamma_{s,t}\circ p_{s}=0$. Commutativity of the above diagram implies that 
\[
\gamma_{s,t}\circ p_{s}=p_{t}\circ\pi_{s,t}.
\]
Note that $p_{t}\circ\varphi_{t}=0$, so we are done if we can show that $\pi_{s,t}$ factors through $\varphi_{t}$. To see that this is the case, we factor $\sigma_{s,t}$ and $\pi_{s,t}$ through their images to obtain a diagram
\[
\begin{tikzcd}
\left(\bigoplus\limits_{\alpha\in A}M_{\alpha}\right)_{t}\arrow[r,"\varphi_{t}",hook]&\left(\prod\limits_{\alpha\in A}M_{\alpha}\right)_{t}\\
\im\sigma_{s,t}\arrow[r,"\psi_{s,t}"]\arrow[u,hook]&\im\pi_{s,t}\arrow[u,hook]\\
\left(\bigoplus\limits_{\alpha\in A}M_{\alpha}\right)_{s}\arrow[r,"\varphi_{s}",hook]\arrow[u,two heads]&\left(\prod\limits_{\alpha\in A}M_{\alpha}\right)_{s}\arrow[u,two heads]
\end{tikzcd}
\]
We can canonically identify
\[
\im\sigma_{s,t}\cong\bigoplus_{\alpha\in A}\im (M_{\alpha})_{s,t}
\]
and
\[
\im\pi_{s,t}\cong\prod_{\alpha\in A}\im (M_{\alpha})_{s,t}.
\]
From commutativity of the previous diagram, it is easy to see that under this identification $\psi_{s,t}$ is simply the canonical inclusion of the direct sum into the product. But here, this map is an isomorphism since $\im\pi_{s,t}$ is finite dimensional by our q-tameness assumption. Thus, we can invert $\psi_{s,t}$, yielding a factorization of $\pi_{s,t}$ as 
\[
\begin{tikzcd}
\left(\prod\limits_{\alpha\in A}M_{\alpha}\right)_{s}\arrow[r,two heads]&\im\pi_{s,t}\arrow[r,"\psi_{s,t}^{-1}",hook, two heads]&\im\sigma_{s,t}\arrow[r,hook]&\left(\bigoplus\limits_{\alpha\in A}M_{\alpha}\right)_{t}\arrow[r,"\varphi_{t}",hook]&\left(\prod\limits_{\alpha\in A}M_{\alpha}\right)_{t}
\end{tikzcd}
\]
As explained above, this finishes the proof.
\end{proof}

Before proceeding to the main proof, we show two more lemmas.

\begin{lem}\label{lem:weak_iso_overline}
Let $\varphi\colon M\to N$ be a weak isomorphism of persistence modules. Then $\varphi$ induces an isomorphism
\[
\overline{\varphi}\colon\overline{M}\to\overline{N}.
\]
\end{lem}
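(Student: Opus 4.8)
The plan is to use the fact that $\overline{(-)} = \colim_{s<t}(-)$ is a functor that preserves exactness in a strong enough sense to kill ephemeral modules. Concretely, I would first reduce to showing that $\overline{(-)}$ sends ephemeral persistence modules to $0$. Indeed, suppose $E$ is ephemeral, i.e. $E_{s,t} = 0$ for all $s < t$. Then for any $t \in T$, the diagram $(E_s)_{s < t}$ has all transition maps equal to zero, so its colimit is zero (any element of any $E_s$ maps to $0$ already in $E_{s'}$ for $s < s' < t$, using density is not even needed — but if $T$ is dense we certainly have such $s'$; in general the colimit over a diagram with all maps zero and no terminal object is zero). Hence $\overline{E} = 0$.

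Next I would factor $\varphi\colon M \to N$ through its image. Writing $K = \ker\varphi$ and $Q = \coker\varphi$, both ephemeral by hypothesis, I get short exact sequences $0 \to K \to M \to \im\varphi \to 0$ and $0 \to \im\varphi \to N \to Q \to 0$. Now the key input: the functor $\overline{(-)}$ is right exact (it is a colimit, and colimits are right exact; more precisely filtered-type colimits of vector spaces are exact, and here $\colim_{s<t}$ is a colimit over the directed-downward poset $\downarrow t$, hence exact on $\mathbf{Vec}_{\mathbb{F}}$, and exactness of persistence module sequences is checked pointwise). Applying $\overline{(-)}$ to the first sequence gives an exact sequence $\overline{K} \to \overline{M} \to \overline{\im\varphi} \to 0$; since $\overline{K} = 0$, the map $\overline{M} \to \overline{\im\varphi}$ is an isomorphism. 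Applying $\overline{(-)}$ to the second sequence gives $0 \to \overline{\im\varphi} \to \overline{N} \to \overline{Q} \to 0$ (using left exactness, which holds because the colimit is exact), and $\overline{Q} = 0$, so $\overline{\im\varphi} \to \overline{N}$ is an isomorphism. Composing, $\overline{\varphi}\colon \overline{M} \to \overline{N}$ is an isomorphism.

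The main technical point to get right is the exactness claim for $\overline{(-)}$: one must observe that $\downarrow t$ is a directed set under $\leq$ (it is totally ordered, hence directed, and nonemptiness is handled by the convention that the colimit over $\emptyset$ is $0$, in which case everything is trivially $0$), so $\colim_{s<t}$ is a filtered colimit of vector spaces and therefore an exact functor; consequently $\overline{(-)}$ is exact on persistence modules since (co)kernels are computed pointwise. With that in hand the argument is just a diagram chase through the two short exact sequences. I do not expect any serious obstacle here; the only thing to be careful about is the edge case where $\downarrow t = \emptyset$, which is handled by the stated convention, and the fact that "exact" for persistence module sequences means pointwise exact, so that the functoriality of $\overline{(-)}$ on morphisms (already set up in the definition via universal properties of colimits) is compatible with taking these exact sequences.
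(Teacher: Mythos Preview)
Your proposal is correct and follows essentially the same approach as the paper: both arguments rest on the exactness of $\overline{(-)}$ (via exactness of filtered colimits of vector spaces) together with the observation that $\overline{E}=0$ for ephemeral $E$. The only cosmetic difference is that the paper uses exactness to identify $\ker\overline{\varphi}\cong\overline{\ker\varphi}$ and $\coker\overline{\varphi}\cong\overline{\coker\varphi}$ directly, whereas you factor $\varphi$ through its image and treat the two resulting short exact sequences separately; the content is the same.
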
 
\begin{proof}
Since taking direct limits of vector spaces is exact, the same is true for the functor $\overline{(-)}$. Thus, this functor commutes with kernels and cokernels, so we get
\[
\ker\overline{\varphi}\cong\overline{\ker\varphi}
\]
and
\[
\coker\overline{\varphi}\cong\overline{\coker\varphi}
\]
Since $\ker\varphi$ and $\coker\varphi$ are ephemeral by assumption, we get that in both cases the right-hand side vanishes. So $\overline{\varphi}$ has trivial kernel and cokernel, which proves the claim.
\end{proof}

\begin{lem}\label{lem:weak_iso_underline}
Assume that every interval in $T$ has a countable coinitial subset. Let $\varphi\colon M\to N$ be a weak isomorphism of persistence modules. Then $\varphi$ induces an isomorphism
\[
\underline{\varphi}\colon\underline{M}\to\underline{N}.
\]
\end{lem}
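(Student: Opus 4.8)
The plan is to proceed in parallel with the proof of \cref{lem:weak_iso_overline}, but keeping in mind that $\underline{(-)}$ is built from inverse limits, which are only left exact. Left exactness alone will immediately give that $\underline{\varphi}$ is injective, so the entire difficulty is surjectivity, and this is exactly where the countable coinitiality hypothesis is used: it reduces the relevant inverse limits to countable towers, on which one can invoke the vanishing of $\lim^1$ for towers with vanishing transition maps.

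First I would fix $t\in T$ and dispose of the trivial case $\uparrow t=\emptyset$, where $\underline{M}_t=\underline{N}_t=0$. Otherwise, I would choose a strictly decreasing sequence $s_1>s_2>\cdots$ in $\uparrow t$ that is coinitial in $\uparrow t$; this is possible because $\uparrow t$ has a countable coinitial subset and (in the setting where the lemma is applied, namely $T$ dense) has no least element. Since a coinitial subset of the totally ordered set $\uparrow t$ already computes the limit, the restriction maps induce a natural isomorphism $\underline{M}_t=\lim_{s>t} M_s\cong\lim_n M_{s_n}$, and similarly for $N$. Hence it suffices to show that the morphism of towers $(\cdots\to M_{s_2}\to M_{s_1})\to(\cdots\to N_{s_2}\to N_{s_1})$ induced by $\varphi$ is an isomorphism after applying $\lim$.

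Then I would factor $\varphi$ pointwise as $M\xrightarrow{\pi}\im\varphi\xrightarrow{\iota}N$ with $\pi$ an epimorphism and $\iota$ a monomorphism, so that $\ker\pi=\ker\varphi$ and $\coker\iota=\coker\varphi$ are ephemeral. Restricting the short exact sequences $0\to\ker\varphi\to M\to\im\varphi\to0$ and $0\to\im\varphi\to N\to\coker\varphi\to0$ to the chain $\{s_n\}$ gives short exact sequences of towers of $\mathbb{F}$-vector spaces, to which I would apply the six-term exact sequence relating $\lim$ and $\lim^1$. The key observation is that the restriction of an ephemeral module to the \emph{strictly} decreasing chain $\{s_n\}$ is a tower all of whose transition maps are zero, and for such a tower both $\lim$ and $\lim^1$ vanish. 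Plugging this in for $\ker\varphi$ in the first six-term sequence yields an isomorphism $\lim_n M_{s_n}\xrightarrow{\sim}\lim_n(\im\varphi)_{s_n}$, and plugging it in for $\coker\varphi$ in the second yields $\lim_n(\im\varphi)_{s_n}\xrightarrow{\sim}\lim_n N_{s_n}$. By functoriality their composite is the map induced by $\varphi$, which is therefore an isomorphism; transporting back shows $\underline{\varphi}_t$ is an isomorphism for all $t$, hence $\underline{\varphi}$ is an isomorphism since isomorphisms of persistence modules are detected pointwise. (If one prefers to avoid $\lim^1$, the surjectivity step can instead be done by hand: lift a compatible family through the $\pi_{s_n}$, observe that the failure of compatibility at level $n$ lies in $(\ker\varphi)_{s_n}$, and correct the lift there, the vanishing transition maps of the ephemeral module $\ker\varphi$ making the corrections consistent.)

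I expect the main obstacle to be precisely the point where this lemma diverges from \cref{lem:weak_iso_overline}: right exactness of $\underline{(-)}$ fails, so one cannot simply say that $\underline{(-)}$ commutes with cokernels, and the genuine content is controlling $\lim^1$. The countable coinitiality hypothesis is the tool that makes this tractable, since on countable towers $\lim^1$ is well behaved and vanishes for ephemeral (zero-transition-map) towers, exactly compensating for the loss of right exactness. A secondary point to handle carefully is the extraction of a strictly decreasing coinitial sequence, which requires $\uparrow t$ to have no least element and thus implicitly uses density of $T$.
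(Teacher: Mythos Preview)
Your proof is correct and follows the same route as the paper's: factor through the image, handle $\underline{\iota}$ via left exactness together with $\underline{\coker\varphi}=0$, and handle $\underline{\pi}$ via the Mittag-Leffler condition on the ephemeral kernel---the paper cites Grothendieck's result directly on the inverse system $(\,\cdot\,)_{s>t}$ rather than first reducing to an explicit countable tower and invoking $\lim^1$, but this is purely cosmetic. Your observation that one implicitly needs $\uparrow t$ to have no least element (hence density of $T$) for the step $\underline{\ker\varphi}=0$ is well taken and applies equally to the paper's argument; in the paper the lemma is only invoked under the standing density hypothesis of \cref{thm:usc}.
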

\begin{proof}
Consider the epi-mono-factorization of $\varphi$ as
\[
\begin{tikzcd}
M\arrow[r,two heads,"p"]&\im\varphi\arrow[r,hook,"i"]&N
\end{tikzcd}
\]
In order to show that $\underline{\varphi}$ is an isomorphism, it suffices to prove that $\underline{p}$ and $\underline{i}$ are isomorphisms. First, consider the short exact sequence
\[
\begin{tikzcd}
0\arrow[r]&\im\varphi\arrow[r,"i"]&N\arrow[r]&\coker\varphi\arrow[r]&0
\end{tikzcd}
\]
Since taking inverse limits of vector spaces is left-exact, the functor $\underline{(-)}$ is also left-exact. Thus, we get an exact sequence
\[
\begin{tikzcd}
0\arrow[r]&\underline{\im\varphi}\arrow[r,"\underline{i}"]&\underline{N}\arrow[r]&\underline{\coker\varphi}
\end{tikzcd}
\]
By assumption, $\coker\varphi$ is ephemeral, so we have $\underline{\coker\varphi}=0$, which implies that $\underline{i}$ is an isomorphism. Next, consider the short exact sequence
\[
\begin{tikzcd}
0\arrow[r]&\ker\varphi\arrow[r]&M\arrow[r,"p"]&\im\varphi\arrow[r]&0
\end{tikzcd}
\]
For each $t\in T$, the interval $\{s\in T\mid s>t\}$ has a countable coinitial subset by assumption. Since $\ker\varphi$ is ephemeral, the inverse system $(\ker\varphi_{s})_{s>t}$ satisfies the Mittag-Leffler property for all $t\in T$. Thus, by \cite[Proposition 13.2.2.]{MR217085} the sequence
\[
\begin{tikzcd}
0\arrow[r]&\lim\limits_{s>t}\ker\varphi_{s}\arrow[r]&\lim\limits_{s>t}M_{s}\arrow[r]&\lim\limits_{s>t}\im\varphi_{s}\arrow[r]&0
\end{tikzcd}
\]
is exact for all $t\in T$. Consequently, the sequence
\[
\begin{tikzcd}
0\arrow[r]&\underline{\ker\varphi}\arrow[r]&\underline{M}\arrow[r,"\underline{p}"]&\underline{\im\varphi}\arrow[r]&0
\end{tikzcd}
\]
is also exact. We have $\underline{\ker\varphi}=0$ since $\ker\varphi$ is assumed to be ephemeral. Hence, $\underline{p}$ is an isomorphism and the proof is finished.
\end{proof}

\begin{rem}
The previous lemma also holds if we replace the assumption on $T$ by the assumption that $T$ be a dense order. In this case, the lemma is a consequence of the fact that $\underline{(-)}$ defines a functor on the observable category of persistence modules and that weak isomorphisms turn to isomorphisms when mapped to the observable category (\cite[Remark 2.12., Theorem 2.9.]{MR3575998}).
\end{rem}

\begin{proof}[Proof of \cref{thm:usc}]
Under our assumptions, $\rad M$ has a barcode by \cref{thm:rad}, say $(I_{\alpha})_{\alpha\in A}$.  We claim that $M$ is isomorphic to the product over the interval modules $C(\underline{I_{\alpha}})$. 

First, we have
\[
M\cong\underline{M}
\]
since we assume $M$ to be u.s.c. Since the canonical map $\rad M\to M$ is a weak isomorphism (as a consequence of \cite[Proposition 2.11.]{MR3575998}), \cref{lem:weak_iso_underline} implies
\[
\underline{M}\cong\underline{\rad M}.
\] 
Recall that $\underline{(-)}$ is a functor, so 
\[
\underline{\rad M}\cong\underline{\bigoplus_{\alpha\in A}C(I_{\alpha})}
\]
because the barcode of $\rad M$ is given by the $I_{\alpha}$. The inclusion of the direct sum into the product is a weak isomorphism in the q-tame case (\cref{lem:sum_prod}), so \cref{lem:weak_iso_underline} implies
\[
\underline{\bigoplus_{\alpha\in A}C(I_{\alpha})}\cong\underline{\prod_{\alpha\in A}C(I_{\alpha})}.
\]
Since limits commute with products we also get
\[
\underline{\prod_{\alpha\in A}C(I_{\alpha})}\cong\prod_{\alpha\in A}\underline{C(I_{\alpha})}.
\]
We have $\underline{C(I_{\alpha})}\cong C(\underline{I_{\alpha}})$ by \cref{lem:over_under_rad_int}, so that 
\[
\prod_{\alpha\in A}\underline{C(I_{\alpha})}\cong\prod_{\alpha\in A}C(\underline{I_{\alpha}}).
\]
Putting everything together yields that $M$ is indeed isomorphic to the product over the $C(\underline{I_{\alpha}})$.

The uniqueness part of the statement essentially follows by reversing the above argument. Suppose $(J_{\beta})_{\beta\in B}$ are also intervals such that
\[
M\cong\prod_{\beta\in B}C(J_{\beta}).
\] 
We want to prove that $(J_{\beta})_{\beta\in B}$ and $(\underline{I_{\alpha}})_{\alpha\in A}$ agree up to reordering. Note that this in particular implies that each $\underline{I_{\alpha}}$ is non-empty. Since $M\cong\prod_{\beta} C(J_{\beta})$ is u.s.c.\@ each factor $C(J_{\beta})$ must be u.s.c.\@ as well by \cref{lem:prod_sum_sc}. Together with \cref{lem:int_sc} this yields
\[
\underline{J_{\beta}}=J_{\beta}.
\]
Thus, by \cref{lem:usc_rad} we get that $\rad J_{\beta}$ is non-empty and consequently an interval for all $\beta$. Next, we will show that $(\rad J_{\beta})_{\beta\in B}$ is a barcode for $\rad M$: Consider
\[
\rad M\cong\rad\prod_{\beta\in B}C(J_{\beta})=\im\left(\overline{\prod_{\beta\in B}C(J_{\beta})}\to\prod_{\beta\in B}C(J_{\beta})\right).
\] 
Recall that the inclusion of the direct sum into the product is a weak isomorphism in our case, so together with \cref{lem:weak_iso_overline} we obtain that 
\[
\im\left(\overline{\prod_{\beta\in B}C(J_{\beta})}\to\prod_{\beta\in B}C(J_{\beta})\right)\cong\im\left(\overline{\bigoplus_{\beta\in B}C(J_{\beta})}\to\prod_{\beta\in B}C(J_{\beta})\right),
\]
where the map on the right is equal to the composition of the natural map \[\overline{\bigoplus_{\beta\in B}C(J_{\beta})}\to\bigoplus_{\beta\in B}C(J_{\beta})\] and the inclusion $\bigoplus_{\beta\in B}C(J_{\beta})\to\prod_{\beta\in B}C(J_{\beta})$. Since this inclusion is mono, we get
\[
\im\left(\overline{\bigoplus_{\beta\in B}C(J_{\beta})}\to\prod_{\beta\in B}C(J_{\beta})\right)\cong\im\left(\overline{\bigoplus_{\beta\in B}C(J_{\beta})}\to\bigoplus_{\beta\in B}C(J_{\beta})\right).
\]
Direct sums and the functor $\overline{(-)}$ commute. The same is true for direct sums and images, so we get
\[
\im\left(\overline{\bigoplus_{\beta\in B}C(J_{\beta})}\to\bigoplus_{\beta\in B}C(J_{\beta})\right)\cong\bigoplus_{\beta\in B}\im(\overline{C(J_{\beta})}\to C(J_{\beta}))=\bigoplus_{\beta\in B}\rad C(J_{\beta}).
\]
We have $\rad C(J_{\beta})\cong C(\rad J_{\beta})$ by \cref{lem:over_under_rad_int}, so we get
\[
\bigoplus_{\beta\in B}\rad C(J_{\beta})\cong \bigoplus_{\beta\in B}C(\rad J_{\beta}).
\] 
In total, we have shown that $(\rad J_{\beta})_{\beta\in B}$ is indeed a barcode for $\rad M$. 

Using the Krull-Remak-Schmidt-Azumaya Theorem, we obtain that $(I_{\alpha})_{\alpha\in A}$ and $(\rad J_{\beta})_{\beta\in B}$ agree up to reordering. This implies that also $(\underline{I_{\alpha}})_{\alpha\in A}$ and $(\underline{\rad J_{\beta}})_{\beta\in B}$ agree up to reordering. Now recall that we have $\underline{J_{\beta}}=J_{\beta}$ for all $\beta$ because $M$ is u.s.c. By \cref{lem:usc_rad}, we get that $(\underline{\rad J_{\beta}})_{\beta\in B}=(J_{\beta})_{\beta}$. Thus, $(\underline{I_{\alpha}})_{\alpha\in A}$ and $(J_{\beta})_{\beta\in B}$ agree up to reordering. This finishes the proof.
\end{proof}

\section*{Acknowledgments}
The author thanks Ulrich Bauer for his invaluable guidance during the preparation of the thesis that the results in this paper grew out of. Peter Albers and Lucas Dahinden also provided helpful comments on this manuscript. 
This research was supported by Deutsche Forschungsgemeinschaft (DFG, German Research Foundation) through Germany’s Excellence Strategy EXC-2181/1 - 390900948 (the Heidelberg STRUCTURES Excellence Cluster), the Transregional Colloborative Research Center CRC/TRR 191 (281071066) and the Research Training Group RTG 2229 (281869850).

\providecommand{\bysame}{\leavevmode\hbox to3em{\hrulefill}\thinspace}
\providecommand{\MR}{\relax\ifhmode\unskip\space\fi MR }
\providecommand{\MRhref}[2]{%
  \href{http://www.ams.org/mathscinet-getitem?mr=#1}{#2}
}
\providecommand{\href}[2]{#2}

\end{document}